\documentclass{article}
\usepackage{tikz}
\usetikzlibrary{positioning}
\tikzset{main node/.style={circle,fill=blue!20,draw,minimum size=0.4cm,inner sep=0pt},}
\usepackage{amsmath}
\usepackage{amsthm}
\usepackage{amsfonts}
\usepackage{listings}
\usepackage{float}
\usepackage{hyperref}
\usepackage[labelformat=simple]{subcaption}

\usepackage[numbered]{matlab-prettifier}
\usepackage[T1]{fontenc}
\usepackage{enumitem}
\restylefloat{table}
\newtheorem{theorem}{Theorem}
\newtheorem{lemma}{Lemma}
\newtheorem{corollary}[theorem]{Corollary}
\theoremstyle{definition}
\newtheorem{definition}{Definition}
\newtheorem{remark}{Remark}

\usepackage{thmtools}
\usepackage{thm-restate}
\usepackage{cleveref}

\usepackage[new]{old-arrows}

\DeclareMathOperator{\chara}{char}

\DeclareMathOperator{\Tor}{Tor}
\DeclareMathOperator{\vsupp}{vsupp}

\lstdefinestyle{mymatstyle}{%
	style=Matlab-editor,
	basicstyle=\mlttfamily,
	frame=leftline,
	numberstyle=\scriptsize,
	xleftmargin=1.8em,
}

\newcommand{\Addresses}{{
  \bigskip
  \footnotesize

  Peter Phelan, \textsc{National University of Ireland, Galway}\par\nopagebreak
  \textit{Email address}: \texttt{p.phelan3@nuigalway.ie}
  
}}
  

\begin{document}

\title{Parity Binomial Edge Ideals with Pure Resolutions}
\author{Peter Phelan}
\date{National University of Ireland, Galway \\
    \today}
\maketitle
\begin{abstract}
We provide a characterisation of all graphs whose parity binomial edge ideals have pure resolutions. In particular, we show that the minimal free resolution of a parity binomial edge ideal is pure if and only if the corresponding graph is a complete bipartite graph, or a disjoint union of paths and odd cycles.
\end{abstract}
\section{Introduction}
The parity binomial edge ideal of a simple undirected graph $G$ was introduced in \cite{pbei} by Kahle, Sarmiento and Windisch. It's here we learn that these ideals do not have a square-free Gr{\"o}bner basis, and that they are radical if and only if $G$ is bipartite or the ground field has $\chara(\mathbb{K}) \neq 2$.
\begin{definition} The parity binomial edge ideal of a graph $G$ is given by
$$ J_{G} = (x_ix_j-y_iy_j \, | \, \{i,j\} \in E(G) ) \subseteq \mathbb{K}[x_i,y_i \, | \, i \in V(G)]$$
\end{definition}
Kumar later revealed in \cite{intersection} that $J_{G}$ is a complete intersection if and only if $G$ is a disjoint union of paths and odd cycles, followed by a characterisation of all graphs for which $J_{G}$ is an almost complete intersection. For a complete graph $G$, it was shown by Badiane, Burke and Sk{\"o}ldberg in \cite{basis} that the universal Gr{\"o}bner basis and the Graver basis of $J_{G}$ coincide; in addition, the Hilbert-Poincaré series of $J_{G}$ was also determined in \cite{hilb} by Do Trong Hoang and Kahle. This short survey reveals a growing interest in these ideals, aided by their appearance in various other fields; for example, in statistics, biology and machine learning. \\ \\
In this paper, we are interested in studying parity binomial edge ideals with pure resolutions. These resolutions have been the subject of serious interest in recent years, often described as the "building blocks" of Betti diagrams. In fact, conjectures posed by Boij and S{\"o}derberg, later proven by Eisenbud and Schreyer, demonstrate how pure resolutions can be used to examine the range of admissible Betti diagrams. An excellent review of this topic can be found in \cite{boij}.
\begin{definition}
Suppose $R$ is a polynomial ring and $\mathbb{F}$ is the graded free resolution of a finitely generated graded $R$-module $M$ with components $F_i = \bigoplus_{j} R(-j)^{\beta_{i,j}}$. The resolution $\mathbb{F}$ is called pure if each $F_{i}$ is generated in a single degree, so $\mathbb{F}$ can be written as follows:
$$ \dots \longrightarrow R(-d_{3})^{\beta_{3,d_{3}}} \longrightarrow R(-d_{2})^{\beta_{2,d_{2}}} \longrightarrow R(-d_{1})^{\beta_{1,d_{1}}} \longrightarrow M \longrightarrow 0 $$
\end{definition}
Another study of note is the work of Dariush Kiani and Sara Saeedi Madani in \cite{pure}, where they prove that a binomial edge ideal has a pure resolution if and only if $G$ is a complete graph, a complete bipartite graph or a disjoint union of paths. A natural question arising from this result is whether a similar characterisation exists for parity binomial edge ideals. This question serves as the basis of our investigation, and brings us to the main result of the paper.
\begin{restatable}{thm}{pure}
\label{thm:pure}
Let $G$ be a simple connected graph and let $J_{G}$ be the parity binomial edge ideal of $G$. If the minimal free resolution of $J_{G}$ is pure, then $G$ is a bipartite graph or an odd cycle.
\end{restatable}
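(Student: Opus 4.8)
The plan is to prove the contrapositive: if $G$ is connected, not bipartite, and not an odd cycle, then the minimal free resolution of $J_G$ — equivalently of $R/J_G$ — is not pure. In a pure resolution of $R/J_G$ one has $d_0=0$, $d_1=2$ (all generators are quadrics) and $0<2<d_2<\cdots<d_p$; since each differential of a minimal resolution has positive degree, $d_i-i$ is nondecreasing, so $\operatorname{reg}(R/J_G)=d_p-p$ with $p=\operatorname{pd}(R/J_G)$. One checks moreover that $J_G$ has no linear syzygies — for a two-path $i-j-k$ the natural candidate $x_kf_{ij}-x_if_{jk}$ equals $y_j(x_iy_k-x_ky_i)\neq 0$, and the general case reduces to this — whence $d_2=4$ is forced, so the odd-cycle part of $G$ "wants" a Koszul resolution. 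None of this alone rules out purity, though; the argument runs through a reduction to small graphs.

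For the combinatorial reduction, choose a shortest odd cycle $C=C_{2k+1}$ in $G$. It is chordless (a chord would yield a shorter odd cycle together with an even one), hence an induced subgraph; and $V(G)\supsetneq V(C)$, since otherwise $G$ would be $C$ plus at least one chord. As $G$ is connected there is a vertex $v\notin V(C)$ adjacent to $C$, and minimality of $C$ restricts its neighbours on $C$: an edge from $v$ to two cycle-vertices at distance $d$ produces odd cycles of length $d+2$ or $2k+3-d$, forcing (for $k\geq2$) those neighbours to lie at distance exactly $2$ and to be at most two in number. Hence the induced subgraph $H:=G[V(C)\cup\{v\}]$ is one of: the paw, the diamond $K_4-e$, or $K_4$ (when $k=1$); the tadpole $T_k$, i.e. $C_{2k+1}$ with one pendant edge; or the graph $H_k'$ got from $C_{2k+1}$ by adjoining a vertex joined to two cycle-vertices at distance $2$ (for $k\geq2$).

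For the homological step I would use a monotonicity lemma — adapting what is known for ordinary binomial edge ideals — that $\beta_{i,j}(J_H)\leq\beta_{i,j}(J_G)$ whenever $H$ is an induced subgraph of $G$; its proof rests on the elimination identity $J_H=J_G\cap R_H$ (still valid for parity binomial edge ideals) and on $R_G$ being free over $R_H$, so that $\operatorname{Tor}^{R_H}(R_H/J_H,\mathbb K)$ embeds in $\operatorname{Tor}^{R_G}(R_G/J_G,\mathbb K)$. Granting it, it suffices to show that each of the graphs $H$ above has a non-pure resolution, i.e. that some $F_i(J_H)$ is generated in two distinct degrees, since that property transfers to $J_G$. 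The cases $k=1$ (paw, diamond, $K_4$) are a direct Betti-table computation: for the paw, $J_{C_3}$ is a complete intersection contributing Koszul Betti numbers $\beta_{i,2i}$, while the pendant-edge generator is a zerodivisor modulo $J_{C_3}$ — since $T_1$ is not a complete intersection, by \cite{intersection} — so a non-Koszul syzygy appears and $F_3$ ends up with two degrees. For $T_k$ and $H_k'$ with $k\geq2$ I would exploit the short exact sequence $0\to (R/(J_{C_{2k+1}}:f))(-2)\xrightarrow{\ f\ }R/J_{C_{2k+1}}\to R/J_H\to0$, with $f$ the extra quadric: since $R/J_{C_{2k+1}}$ has a Koszul resolution, the resolution of $R/J_H$ is a mapping cone of the Koszul complex and a resolution of $R/(J_{C_{2k+1}}:f)$, and understanding this colon ideal — equivalently $\operatorname{Ann}_{R/J_{C_{2k+1}}}(f)$, which is controlled by $(J_{C_{2k+1}}:(x_u,y_u))$ at the attaching vertex $u$ — produces a graded Betti number off the Koszul diagonal $\{(i,2i)\}$, hence non-purity.

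The main obstacle is exactly this last step: controlling the ideals $(J_{C_{2k+1}}:f)$ uniformly in $k$ precisely enough to locate the off-diagonal Betti number — together with verifying the monotonicity lemma and that the induced-subgraph reduction really is exhaustive. The combinatorial reduction and the small-case computations are routine by comparison.
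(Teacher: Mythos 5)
Your overall strategy (contrapositive, shortest odd cycle, a neighbouring outside vertex $v$, and monotonicity of graded Betti numbers under induced subgraphs) matches the paper's, and your combinatorial case analysis is sound. But there is a genuine gap in the homological step: your reduction takes $H=G[V(C)\cup\{v\}]$, the \emph{entire} cycle plus $v$, which produces the infinite families $T_k$ and $H_k'$ for $k\geq 2$, and you never actually establish non-purity for these. The mapping-cone argument you sketch is not carried out — you would need to compute $(J_{C_{2k+1}}:f)$ uniformly in $k$ and locate an off-diagonal Betti number, which you yourself flag as the main obstacle — and for $H_k'$ the single-quadric exact sequence $0\to (R/(J_C:f))(-2)\to R/J_C\to R/J_H\to 0$ does not even apply as stated, since two generators are adjoined. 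Separately, in the $k=1$ case your inference ``the pendant generator is a zerodivisor mod $J_{C_3}$, hence $F_3$ has two degrees'' is not a proof: a zerodivisor produces an extra \emph{first} syzygy, and one must check its degree (it turns out to be $4$, coinciding with the Koszul syzygies, which is exactly why the non-purity only shows up at $\beta_{3,5}$ versus $\beta_{3,6}$ and requires the more careful Mayer--Vietoris-type computation the paper performs).

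The paper avoids the infinite family entirely by localising further: it takes only a degree-$\geq 3$ vertex of the cycle, its two cycle-neighbours, and $v$ (four vertices), giving claw, paw, diamond or $K_4$; in the claw case it adds one more cycle vertex, and the resulting five-vertex graphs contain induced $K_{1,3}+P_4$ or $K_{2,2}+P_4$, for which $\beta_{3,5}$ and $\beta_{3,6}$ are both shown nonzero via explicit short exact sequences and the Koszul complex. Your reduction can be repaired the same way without any colon-ideal analysis: for $k\geq 2$ the tadpole $T_k$ contains an induced $K_{1,3}$ (at the attaching vertex, since the cycle is chordless of length $\geq 5$) and an induced $P_4$, while $H_k'$ contains an induced $K_{2,2}$ (on $v$, its two cycle-neighbours and their common midpoint) and an induced $P_4$; so once you know $\beta_{3,5}$ and $\beta_{3,6}$ are nonzero for these four small graphs, monotonicity finishes every $k\geq 2$ at once. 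As written, though, the proposal does not prove the theorem.
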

To prove this theorem, we consider the contrapositive statement; suppose $G$ is a simple connected graph that is non-bipartite and not an odd cycle, we claim that $J_{G}$ cannot have a pure resolution in this case. The main technique we use here is to consider an induced subgraph $H$ of $G$, for which there are two pairs of integers $(i,j)$ and $(i,k)$, $j \neq k$, such that $\beta_{i,j}(R/J_{H})$ and $\beta_{i,k}(R/J_{H})$ are nonzero. By applying \cite[Proposition 8]{retract_2} to these Betti numbers, we deduce that $\beta_{i,j}(R/J_{G})$ and $\beta_{i,k}(R/J_{G})$ must also be nonzero, and thus, $J_{G}$ must have a nonpure resolution. This means the proof of Theorem \ref{thm:pure} can be reduced to the study of the graded Betti numbers of induced subgraphs of $G$. Although this proposition deals with the binomial edge ideal of a pair of graphs, an identical proof also applies to $J_{G}$. We restate this proposition as a remark.
\begin{remark}
\label{remark:rmk1}
Let $G$ be a graph and $H$ an induced subgraph of $G$. Then for all nonnegative integers $i,j$, we have the following inequality of graded Betti numbers: 
$$ \beta_{i,j}(R/J_{H}) \leq \beta_{i,j}(R/J_{G}) $$
\end{remark}
To determine whether some Betti number $\beta_{i,j}(R/J_{H})$ of an induced subgraph of $G$ is nonzero, we will use a method of short exact sequences. In particular, we consider a short exact sequence containing $R/J_{H}$ and two other components for which $\beta_{i,j}$ is known. Then, we consider the long exact $\Tor$ sequence induced by this short exact sequence, and apply the rank-nullity theorem to deduce an equality of Betti numbers. This method relies on a couple more remarks, the next of which is found in \cite[Remark 1.2]{pbei}.
\begin{remark}
\label{remark:rmk2}
Suppose $G$ is a bipartite graph and $V_1,V_2$ are its disjoint sets of vertices. Consider the ring automorphism on $\mathbb{K}[\Vec{x},\Vec{y}]$ which exchanges $x_i$ and $y_i$ for all $i \in V_1$ and leaves the remaining variables invariant. This automorphism maps the binomial edge ideal of $G$ to $J_{G}$, so that any statement proven for the binomial edge ideal of a bipartite graph $G$ must also hold for $J_{G}$, and vice versa.
\end{remark}
Our final remark is a restatement of \cite[Theorem 5.3]{bipartite}, which will be used in conjunction with Remark \ref{remark:rmk2} to study the parity binomial edge ideals of complete bipartite graphs.
\begin{remark}
\label{remark:rmk3}
The binomial edge ideal of the complete bipartite graph $K_{m,n}$ has its Betti diagram in the following form:
\begin{center}
\begin{tabular}{ c|ccccc } 
  & 0 & 1 & 2 & \dots & p \\
 \hline
 0 & 1 & 0 & 0 & \dots & 0 \\ 
 1 & 0 & $mn$ & 0 & \dots & 0 \\ 
 2 & 0 & 0 & $\beta_{2,4}$ & \dots & $\beta_{p,p+2}$
\end{tabular}
\end{center}
where
  \[
    p=\left\{
            \begin{array}{lll}
              m & \text{if} & n=1;\\
              2m+n-2 & \text{if} & m \geq n>1
            \end{array}              
        \right.
  \]
\end{remark}
In Section 2, we begin with a series of lemmas required for the proof of our main result. In particular, we consider a collection of Betti numbers that appear in this proof, and investigate whether these are zero or nonzero. This is followed by the proof of Theorem \ref{thm:pure}. Finally, we show in Section 3 that the minimal free resolution of $J_{G}$ is pure if and only if $G$ is a complete bipartite graph, or a disjoint union of paths and odd cycles. This result will be formulated as a corollary to Theorem \ref{thm:pure}, and relies on the characterisation of binomial edge ideals provided by Kiani and Madani.
\newpage
\section{Parity Binomial Edge Ideals with Pure Resolutions}
Let $C_n$ denote the cycle of length $n$, let $P_n$ denote the path graph on $n$ vertices and let $K_{n,m}$ denote the complete bipartite graph on $n$ and $m$ vertices.
\begin{lemma}
\label{lemma:lem1}
The minimal free resolution of $J_{K_{1,3}}$ is a pure resolution.
\end{lemma}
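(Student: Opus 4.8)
The plan is to obtain this as a quick consequence of Remarks~\ref{remark:rmk2} and~\ref{remark:rmk3}, exploiting the fact that $K_{1,3}$ is a complete bipartite graph, namely $K_{3,1}$ in the notation of Remark~\ref{remark:rmk3}. Since the whole purpose of those two remarks is to transport the known Betti diagram of the binomial edge ideal of a complete bipartite graph over to the parity setting, there is very little to do beyond checking that the resulting diagram has at most one nonzero entry in each column.

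Concretely, I would proceed as follows. First, note that $K_{1,3}$ is bipartite with vertex classes $\{1\}$ and $\{2,3,4\}$, so Remark~\ref{remark:rmk2} applies and gives $\beta_{i,j}(R/J_{K_{1,3}}) = \beta_{i,j}(R/I_{K_{1,3}})$ for all $i,j$, where $I_{K_{1,3}}$ denotes the ordinary binomial edge ideal of $K_{1,3}$. Second, apply Remark~\ref{remark:rmk3} to $K_{m,n}$ with $m=3$ and $n=1$: since $n=1$ we are in the first case of the formula, so $p=m=3$, and the Betti diagram of $R/I_{K_{1,3}}$ is supported exactly on the positions $(0,0)$, $(1,2)$, $(2,4)$ and $(3,5)$, with $\beta_{0,0}=1$ and $\beta_{1,2}=mn=3$. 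Combining the two steps, the minimal free resolution of $R/J_{K_{1,3}}$ has the shape
$$ 0 \longrightarrow R(-5)^{\beta_{3,5}} \longrightarrow R(-4)^{\beta_{2,4}} \longrightarrow R(-2)^{3} \longrightarrow R \longrightarrow R/J_{K_{1,3}} \longrightarrow 0 . $$

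Finally, I would simply observe that in this resolution each free module $F_i$ is generated in a single degree ($0$, $2$, $4$ and $5$ for $i=0,1,2,3$ respectively), which is precisely the definition of a pure resolution; this conclusion is unaffected even if $\beta_{3,5}$ should happen to vanish. There is essentially no obstacle here: the only thing demanding a moment's care is matching the indexing conventions, i.e.\ recognising that $K_{1,3}$ sits in the $n=1$ branch of Remark~\ref{remark:rmk3}, so that the only possibly nonzero Betti numbers are exactly the four listed above. As an independent check one could also compute this resolution directly from the three generators of $J_{K_{1,3}}$---it is not a complete intersection, by Kumar's characterisation quoted in the introduction, so there is a genuine nonlinear first syzygy accounting for $\beta_{2,4}\neq 0$---but the remark-based route is cleaner.
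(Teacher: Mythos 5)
Your proof is correct and takes essentially the same route as the paper's: both deduce purity of $J_{K_{1,3}}$ from Remark~\ref{remark:rmk2} (transporting the statement from the ordinary binomial edge ideal) combined with the pure Betti diagram of Remark~\ref{remark:rmk3}. The paper states this in two sentences without writing out the explicit diagram; your additional bookkeeping with $m=3$, $n=1$, $p=3$ and the supported positions is a harmless elaboration of the same argument.
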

\proof
We know from Remark \ref{remark:rmk3} that the binomial edge ideal of a complete bipartite graph has a pure resolution. Since $K_{1,3}$ is a complete bipartite graph, it follows that $J_{K_{1,3}}$ must have a pure resolution by Remark \ref{remark:rmk2}. \hfill \qedsymbol
\begin{lemma}
\label{lemma:lem2}
$\beta_{3,6}(R/J_{C_{3}})$ and $\beta_{3,6}(R/J_{P_{4}})$ are nonzero.
\end{lemma}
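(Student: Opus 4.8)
The plan is to recognise both $J_{C_3}$ and $J_{P_4}$ as complete intersections and then read off the desired Betti number directly from the associated Koszul resolution. Since $C_3$ is an odd cycle and $P_4$ is a path, Kumar's characterisation in \cite{intersection} tells us that $J_{C_3}$ and $J_{P_4}$ are both complete intersections. In each case the underlying graph has exactly three edges, so $J_G$ is generated by the three quadratic binomials coming from those edges; because the number of generators equals the height of the ideal, these three quadrics automatically form a minimal generating set and a regular sequence in the ambient polynomial ring $R$.

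It follows that the minimal free resolution of $R/J_G$, for $G \in \{C_3, P_4\}$, is the Koszul complex on these three degree-$2$ generators:
$$ 0 \longrightarrow R(-6) \longrightarrow R(-4)^{3} \longrightarrow R(-2)^{3} \longrightarrow R \longrightarrow R/J_{G} \longrightarrow 0. $$
In particular $\beta_{i,2i}(R/J_G) = \binom{3}{i}$ for $i = 0,1,2,3$, so that $\beta_{3,6}(R/J_{C_3}) = \beta_{3,6}(R/J_{P_4}) = \binom{3}{3} = 1$, which is nonzero, as required.

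I do not expect any serious obstacle here: the only point needing care is the claim that the three edge-binomials genuinely form a regular sequence (equivalently, a minimal generating set), and this is immediate from the complete intersection property of \cite{intersection} together with a count of the edges in each graph. As an independent sanity check one could instead compute these two resolutions directly in a computer algebra system, but the complete intersection argument is shorter and self-contained given Kumar's result.
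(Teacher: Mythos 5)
Your proposal is correct and follows essentially the same route as the paper: both identify $J_{C_3}$ and $J_{P_4}$ as complete intersections via Kumar's characterisation, resolve them by the Koszul complex on their three quadratic generators, and read off $\beta_{3,6} = \binom{3}{3} = 1 > 0$. Your version is slightly more explicit in writing out the resolution, but the argument is the same.
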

\proof
Since $C_3$ is an odd cycle and $P_4$ is a path, we know that $J_{C_{3}}$ and $J_{P_{4}}$ are complete intersection ideals and therefore resolved by the Koszul complex. Recall that the Koszul complex on a sequence of $k$ elements satisfies $\beta_{i,j} = {k\choose i}$ whenever $j=2i$, and $\beta_{i,j} = 0$ otherwise. Since each of these ideals have more than two generators, and thus, the sequence associated to the Koszul complex has more than two elements, we conclude that $\beta_{3,6}(R/J_{C_{3}}) > 0$ and $\beta_{3,6}(R/J_{P_{4}}) > 0$ as claimed. \hfill \qedsymbol
\begin{lemma}
\label{lemma:lem3}
$\beta_{3,5}(R/J_{K_{2,2}})$ and $\beta_{3,5}(R/J_{K_{1,3}})$ are nonzero, while $\beta_{3,5}(R/J_{C_{3}})$, $\beta_{3,5}(R/J_{P_{4}})$, $\beta_{3,5}(R/J_{P_{3}})$, $\beta_{3,5}(R/J_{P_{2}})$, $\beta_{2,5}(R/J_{K_{1,3}})$, $\beta_{2,5}(R/J_{C_3})$, $\beta_{2,5}(R/J_{P_3})$, $\beta_{2,5}(R/J_{P_2})$ are all equal to zero.
\end{lemma}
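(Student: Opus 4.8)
The plan is to separate the ten assertions into two families: the graphs $C_3,P_4,P_3,P_2$, whose parity binomial edge ideals are complete intersections, and the complete bipartite graphs $K_{2,2}$ and $K_{1,3}$. For the first family I would argue exactly as in Lemma~\ref{lemma:lem2}: since $C_3$ is an odd cycle and $P_2,P_3,P_4$ are paths, each of $J_{C_3},J_{P_4},J_{P_3},J_{P_2}$ is a complete intersection, hence resolved by a Koszul complex, whose graded Betti numbers satisfy $\beta_{i,j}=\binom{c}{i}$ if $j=2i$ and $\beta_{i,j}=0$ otherwise, where $c$ is the number of generators. Therefore $\beta_{3,5}=0$ for all four (as $5\neq 6$) and $\beta_{2,5}=0$ for $J_{C_3},J_{P_3},J_{P_2}$ (as $5\neq 4$); for $P_2$ and $P_3$ we even have $c\le 2$, so homological degree $3$ vanishes entirely. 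This settles all eight vanishing statements.

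For the two complete bipartite graphs I would pass to the binomial edge ideal via Remark~\ref{remark:rmk2} and then read Remark~\ref{remark:rmk3}. For $K_{1,3}$ (so $m=3$, $n=1$, $p=3$) the Betti table is supported only in the rows $j-i\in\{0,1,2\}$: thus $\beta_{2,5}(R/J_{K_{1,3}})=0$, since $(2,5)$ lies in row $3$, while the only entry in homological degree $3$ is $\beta_{3,5}$, which is nonzero because the projective dimension is $p=3$. For $K_{2,2}$ (so $m=n=2$, $p=4$) the same support statement shows that from homological degree $2$ onward the minimal free resolution of $R/J_{K_{2,2}}$ lives entirely in row $2$, hence is a linear complex; it is precisely the minimal free resolution of $N:=\mathrm{Syz}_2(R/J_{K_{2,2}})$, a module generated in the single degree $4$, with $\beta_{0,4}(N)=\beta_{2,4}(R/J_{K_{2,2}})$, $\beta_{1,5}(N)=\beta_{3,5}(R/J_{K_{2,2}})$ and $\beta_{2,6}(N)=\beta_{4,6}(R/J_{K_{2,2}})$. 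The first of these is nonzero by Remark~\ref{remark:rmk1}, since $P_3$ occurs as an induced subgraph of $K_{2,2}$ and $\beta_{2,4}(R/J_{P_3})=\binom{2}{2}=1$, and the last is nonzero because the projective dimension is $p=4$.

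To conclude that $\beta_{3,5}(R/J_{K_{2,2}})\neq 0$ I would invoke the standard fact that the linear strand of a module generated in a single degree is contiguous: if $M$ is generated in degree $d$ and $\beta_{i,i+d}(M)=0$, then $\mathrm{Syz}_i(M)$ is generated in degrees $\ge d+i+1$, hence $\mathrm{Syz}_{i+1}(M)$ is generated in degrees $\ge d+i+2$ and $\beta_{i+1,i+1+d}(M)=0$, so the nonzero positions of the linear strand form an interval beginning at $0$. Applied to $N$, the nonvanishing of $\beta_{0,4}(N)$ and $\beta_{2,6}(N)$ forces $\beta_{1,5}(N)=\beta_{3,5}(R/J_{K_{2,2}})$ to be nonzero. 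I expect this last point to be the only real obstacle: Remark~\ref{remark:rmk3} records only the shape of the Betti table of $K_{2,2}$, not that each displayed entry is nonzero, so the interior entry $\beta_{3,5}(R/J_{K_{2,2}})$ must be pinned down by the contiguity argument above (or, alternatively, by quoting the explicit Betti numbers underlying \cite[Theorem 5.3]{bipartite}).
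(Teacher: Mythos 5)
Your proof is correct, and for nine of the ten assertions it is the same as the paper's: the vanishing statements for $C_3,P_4,P_3,P_2$ follow from the Koszul resolution exactly as in Lemma~\ref{lemma:lem2}, and the statements for $K_{1,3}$ (both $\beta_{2,5}=0$ and $\beta_{3,5}\neq 0$, the latter because $(3,5)=(p,p+2)$ with $p=3$) are read off from Remarks~\ref{remark:rmk2} and~\ref{remark:rmk3} just as in the paper. The one place you diverge is $\beta_{3,5}(R/J_{K_{2,2}})$, and you have correctly identified why: here $p=4$, so $(3,5)$ is an \emph{interior} entry of the second row, and Remark~\ref{remark:rmk3} as stated only records the shape of the Betti table, not the nonvanishing of interior entries. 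The paper simply cites Remarks~\ref{remark:rmk2} and~\ref{remark:rmk3} for this nonvanishing, implicitly leaning on the explicit Betti numbers computed in \cite[Theorem 5.3]{bipartite}. Your alternative --- observing that the resolution of $R/J_{K_{2,2}}$ is linear from homological degree $2$ on, that the second syzygy module $N$ is generated in the single degree $4$ (nonzero via $P_3\subset K_{2,2}$ and Remark~\ref{remark:rmk1}), that $\beta_{2,6}(N)=\beta_{4,6}\neq 0$ since the projective dimension is $4$, and that the nonzero positions of the linear strand of a module generated in a single degree form an interval starting at $0$ --- is a correct standard argument and buys you independence from the explicit values in the citation; what it costs is that you must still trust Remark~\ref{remark:rmk3} for the shape of the table and for the projective dimension being exactly $p=4$. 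Either route is fine; yours is the more self-contained of the two at this one entry.
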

\proof
We already know from Lemma \ref{lemma:lem2} that $\beta_{3,5}(R/J_{C_{3}}) = 0$, $\beta_{3,5}(R/J_{P_{4}}) = 0$, $\beta_{3,5}(R/J_{P_{3}}) = 0$, $\beta_{3,5}(R/J_{P_{2}}) = 0$, $\beta_{2,5}(R/J_{C_{3}}) = 0$, $\beta_{2,5}(R/J_{P_{3}}) = 0$ and $\beta_{2,5}(R/J_{P_{2}}) = 0$ as each of these ideals are resolved by the Koszul complex. Then, since $K_{1,3}$ and $K_{2,2}$ are complete bipartite graphs, Remarks \ref{remark:rmk2} \& \ref{remark:rmk3} imply that $\beta_{2,5}(R/J_{K_{1,3}}) = 0$, $\beta_{3,5}(R/J_{K_{1,3}}) > 0$ and $\beta_{3,5}(R/J_{K_{2,2}}) > 0$ as claimed. \hfill \qedsymbol \\ \\
For the next result, we will need the notion of a multi-homogeneous polynomial.
\begin{definition}
A polynomial $f$ in $\mathbb{K}[x_{1,1},x_{1,2} \dots x_{1,k_{1}},x_{2,1},x_{2,2} \dots x_{n,k_{n}}]$ is called multi-homogeneous of multi-degree $(d_1 \dots d_n)$ if it is homogeneous of degree $d_i$ in variables $x_{i,1} \dots x_{i,k_{i}}$ for all nonnegative integers $i \leq n$.
\end{definition}
It is known that parity binomial edge ideals are homogeneous with respect to the $\mathbb{N}^{n}$ grading on $R$, given by $\deg(x_i) = \deg(y_i) = \Vec{e_i}$ for all $i \in [n]$, where $\Vec{e_{i}}$ the $i$th standard basis vector in $\mathbb{N}^{n}$. This grading allows us to consider parity binomial edge ideals as multi-homogeneous of multi-degree $(d_1 \dots d_n)$, where each $d_i$ corresponds to the variables $x_i$ and $y_i$ for all $i$. Therefore, any intersection of parity binomial edge ideals must also be multi-homogeneous with respect to this grading.
\newpage
\begin{lemma}
\label{lemma:lem4}
$J_{C_3} \cap J_{K_{1,3}}$, $J_{P_3} \cap J_{K_{1,3}}$ and $J_{P_2} \cap J_{K_{1,3}}$ are generated in degree  at least four.
\end{lemma}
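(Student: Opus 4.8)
\emph{Proof plan.}
The plan is to exploit the $\mathbb{N}^{n}$-multigrading discussed before the lemma. Label $K_{1,3}$ with centre $v$ and leaves $a,b,c$, so that $J_{K_{1,3}}$ is generated by $x_{v}x_{a}-y_{v}y_{a}$, $x_{v}x_{b}-y_{v}y_{b}$, $x_{v}x_{c}-y_{v}y_{c}$, of multidegrees $e_{v}+e_{a}$, $e_{v}+e_{b}$, $e_{v}+e_{c}$; in each of the three cases the second graph lives on the leaf set $\{a,b,c\}$, hence is edge-disjoint from $K_{1,3}$, and $J_{P_{2}}\subseteq J_{P_{3}}\subseteq J_{C_{3}}$. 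Consequently $(J_{G}\cap J_{K_{1,3}})_{d}\subseteq(J_{C_{3}}\cap J_{K_{1,3}})_{d}$ for $G\in\{P_{2},P_{3}\}$ and all $d$. Since each intersection is contained in $\mathfrak{m}^{2}$, and an ideal in $\mathfrak{m}^{2}$ is generated in degree at least four as soon as its degree-$2$ and degree-$3$ components vanish, I would reduce the whole statement to proving $(J_{C_{3}}\cap J_{K_{1,3}})_{2}=0$ and $(J_{C_{3}}\cap J_{K_{1,3}})_{3}=0$.

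Because $J_{C_{3}}$, $J_{K_{1,3}}$ and their intersection are multi-homogeneous, this can be checked multidegree by multidegree. If $m$ is a multidegree with $(J_{C_{3}})_{m}\neq 0$ then $m$ dominates $e_{p}+e_{q}$ for some edge $\{p,q\}$ of $C_{3}$, and if $(J_{K_{1,3}})_{m}\neq 0$ then $m$ dominates $e_{v}+e_{r}$ for some leaf $r$. For $|m|=2$ these requirements are incompatible, as $p,q\in\{a,b,c\}$ forces $m_{v}=0$ while $e_{v}+e_{r}\leq m$ forces $m_{v}\geq 1$; this disposes of degree $2$. For $|m|=3$ the vectors $e_{p}+e_{q}$ and $e_{v}+e_{r}$ cannot have disjoint supports (their union would have weight $4$), and since $v\notin\{p,q\}$ this forces $r\in\{p,q\}$ and then $m=e_{v}+e_{p}+e_{q}$. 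Using that the setup $(J_{C_{3}},J_{K_{1,3}})$ is invariant under the $S_{3}$ that permutes the leaves, it is then enough to treat one such multidegree, say $m=e_{v}+e_{a}+e_{b}$.

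For this $m$, since a multigraded ideal satisfies $(I)_{m}=\sum_{g}R_{m-\deg g}\cdot g$ and only two generators of $J_{K_{1,3}}$ have multidegree $\leq m$, one gets $(J_{K_{1,3}})_{m}=\langle x_{b},y_{b}\rangle\,(x_{v}x_{a}-y_{v}y_{a})+\langle x_{a},y_{a}\rangle\,(x_{v}x_{b}-y_{v}y_{b})$, whereas $(J_{C_{3}})_{m}=\langle x_{v},y_{v}\rangle\,(x_{a}x_{b}-y_{a}y_{b})$; both are spanned by monomials in $x_{v},y_{v},x_{a},y_{a},x_{b},y_{b}$. Writing a general element $(\lambda x_{v}+\mu y_{v})(x_{a}x_{b}-y_{a}y_{b})$ of $(J_{C_{3}})_{m}$, its coefficients on the monomials $x_{v}y_{a}y_{b}$ and $y_{v}x_{a}x_{b}$ are $-\lambda$ and $\mu$; and expanding the four products spanning $(J_{K_{1,3}})_{m}$ shows that neither of these two monomials ever occurs there. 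Hence any element of the intersection has $\lambda=\mu=0$, so $(J_{C_{3}}\cap J_{K_{1,3}})_{m}=0$; ranging over the finitely many relevant multidegrees yields $(J_{C_{3}}\cap J_{K_{1,3}})_{3}=0$, completing the argument.

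The step I expect to be the main obstacle is the last one: one has to be careful to justify that $(J_{K_{1,3}})_{m}$ is \emph{exactly} the span of those four products — this uses the identity $(I)_{m}=\sum_{g}R_{m-\deg g}\cdot g$ for the coarse multigrading together with the fact that the third generator has a multidegree not below $m$ — and then to run the short but unavoidable monomial bookkeeping isolating $x_{v}y_{a}y_{b}$ and $y_{v}x_{a}x_{b}$ as the monomials that distinguish $(J_{C_{3}})_{m}$ from $(J_{K_{1,3}})_{m}$. Everything else is an easy degree count.
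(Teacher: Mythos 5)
Your argument is correct and follows essentially the same route as the paper: use the multigrading to force any low-degree element of the intersection into a multidegree of the form $e_v+e_p+e_q$ supported on the centre and two leaves, then do explicit linear algebra on the degree-three monomials to show the two graded pieces meet trivially (your two distinguished monomials $x_vy_ay_b$, $y_vx_ax_b$ play the role of the paper's linear-independence check on the combined bases of $V_{C_3}$ and $V_{K_{1,3}}$). The only difference is cosmetic: you dispose of the $P_3$ and $P_2$ cases via the containments $J_{P_2},J_{P_3}\subseteq J_{C_3}$ (note only containment in $J_{C_3}$ is needed, not the full chain $J_{P_2}\subseteq J_{P_3}$), where the paper instead relabels and repeats the triangle argument.
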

\proof
The ideals in this lemma refer particularly to those in Figures \ref{fig:4}, \ref{fig:5} and \ref{fig:6}, which are written as shown: $J_{C_{3}} = (x_1x_3-y_1y_3, x_1x_4-y_1y_4, x_3x_4-y_3y_4)$, $J_{K_{1,3}} = (x_1x_2-y_1y_2, x_2x_3-y_2y_3, x_2x_4-y_2y_4)$, $J_{P_{3}} = (x_1x_4-y_1y_4, x_3x_4-y_3y_4)$ and $J_{P_{2}} = (x_1x_3-y_1y_3)$. Suppose $\Vec{\alpha} \in \mathbb{N}^{2n}$ and $x^{\Vec{\alpha}}$ is a monomial in $R$, we define the support of $x^{\Vec{\alpha}}$ by the set $\vsupp(x^{\Vec{\alpha}}) = \{\, i \; | \; x_i \; \text{or} \; y_i \;\text{divides} \; x^{\Bar{\alpha}} \, \}$. If we let $f$ be a multi-homogeneous, nonzero generator of $J=J_{C_{3}} \cap J_{K_{1,3}}$, then we know that $f$ must have degree at least three and that $|\vsupp(f)|$ must also be at least three, as the graphs $C_3$ and $K_{1,3}$ have no edges in common. \\ \\
Let us then assume that the degree of $f$ is equal to three. Since $f$ is multi-homogeneous, we know that $|\vsupp(f)|$ must also be equal to three. The graph $K_{1,3}$ requires that $2 \in \vsupp(f)$. So we may assume, without loss of generality, that $\vsupp(f) = \{1,2,3\}$. Then the vector space $V$ of degree three monomials supported by $\vsupp(f)$ has a basis $\{x_1x_2x_3, x_1x_2y_3, x_1y_2x_3, y_1x_2x_3, x_1y_2y_3, y_1x_2y_3, \newline y_1y_2x_3, y_1y_2y_3\}$, and $f$ is an element of $V \cap J$. \\ \\
The vector space $V_{C_3}$ given by $J_{C_{3}} \cap V$ has a basis $\{x_1x_2x_3-y_1x_2y_3, x_1y_2x_3-y_1y_2y_3\}$, and the vector space $V_{K_{1,3}}$ given by $J_{K_{1,3}} \cap V$ has a basis $\{x_1x_2x_3-y_1y_2x_3, x_1x_2y_3-y_1y_2y_3, x_1x_2x_3-x_1y_2y_3, y_1x_2x_3-y_1y_2y_3\}$. Note that each basis element contained in $V_{C_3} + V_{K_{1,3}}$ has a monomial term not shared by any other basis element, so these basis elements are linearly independent. From this, it is clear that $\dim(V_{C_{3}} + V_{K_{1,3}}) = \dim(V_{C_3}) + \dim(V_{K_{1,3}})$; therefore $\dim(V_{C_3} \cap V_{K_{1,3}}) = 0$ and $V_{C_3} \cap V_{K_{1,3}} = V \cap J = 0$, a contradiction. We conclude that $f$ must have degree at least four, and thus, $J_{C_3} \cap J_{K_{1,3}}$ is generated in degree at least four. \\ \\
To deal with the remaining intersections, we first note that $K_{1,3}$ and $P_3$ require $2,4 \in \vsupp(f)$ for $f$ a multi-homogeneous, nonzero generator of $J_{P_3} \cap J_{K_{1,3}}$. So we may assume, without loss of generality, that $\vsupp(f) = \{1,2,4\}$. This is simply a relabelling of the previous case, as we have a triangle with two edges from one graph, and one edge from the second graph. Thus, we apply the exact same argument used above and conclude $J_{P_3} \cap J_{K_{1,3}}$ is generated in degree at least four. Similarly, we note that $K_{1,3}$ and $P_{2}$ require $\vsupp(f) = \{1,2,3\}$, so the above argument implies $J_{P_2} \cap J_{K_{1,3}}$ must also be generated in degree at least four and the claim follows. \hfill \qedsymbol
\begin{lemma}
\label{lemma:lem5}
$\beta_{3,5}(R/(J_{C_3} \cap J_{K_{1,3}}))$, $\beta_{3,5}(R/(J_{P_3} \cap J_{K_{1,3}}))$ and \\ $\beta_{3,5}(R/(J_{P_2} \cap J_{K_{1,3}})) $ are all equal to zero.
\end{lemma}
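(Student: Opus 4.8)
The plan is to deduce this directly from Lemma \ref{lemma:lem4}, using only the elementary fact that minimality of a free resolution forces the generating degrees of its free modules to increase at each step. Write $J$ for any one of the three ideals $J_{C_3} \cap J_{K_{1,3}}$, $J_{P_3} \cap J_{K_{1,3}}$ and $J_{P_2} \cap J_{K_{1,3}}$.

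First I would record the degree shift attached to the short exact sequence $0 \to J \to R \to R/J \to 0$. Applying $\Tor_{\bullet}(-,\mathbb{K})$ and using that $R$ is free over itself, the long exact sequence collapses to isomorphisms $\beta_{i,j}(R/J) = \beta_{i-1,j}(J)$ for all $i \geq 2$ and all $j$. In particular $\beta_{3,5}(R/J) = \beta_{2,5}(J)$, so it is enough to show that $J$ has no second syzygy sitting in degree $5$.

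Next I would feed in Lemma \ref{lemma:lem4}, which says $J$ is generated in degree at least $4$. In a minimal graded free resolution $\cdots \to F_2 \to F_1 \to F_0 \to J \to 0$ the entries of every differential lie in the homogeneous maximal ideal, so the least degree of a generator of $F_k$ strictly exceeds that of a generator of $F_{k-1}$; hence $F_0$ is generated in degrees $\geq 4$, $F_1$ in degrees $\geq 5$ and $F_2$ in degrees $\geq 6$. Equivalently $\beta_{2,j}(J) = 0$ for every $j \leq 5$, and combining this with the previous step gives $\beta_{3,5}(R/J) = \beta_{2,5}(J) = 0$. Since the argument is uniform in the three choices of $J$, the lemma follows.

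I do not anticipate a genuine obstacle here: once Lemma \ref{lemma:lem4} is available the statement is essentially a bookkeeping exercise, and the only points demanding a little care are the index shift in $\beta_{i,j}(R/J) = \beta_{i-1,j}(J)$ and the observation that a lower bound on the generating degree of an ideal propagates up its minimal resolution.
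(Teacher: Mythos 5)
Your proof is correct and takes essentially the same approach as the paper: both deduce from Lemma \ref{lemma:lem4} that, in a minimal resolution, the first and second syzygies of $J$ must live in degrees at least $5$ and $6$ respectively, whence $\beta_{3,5}(R/J)=\beta_{2,5}(J)=0$. The only difference is that you spell out the index shift between the resolutions of $J$ and $R/J$, which the paper leaves implicit.
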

\proof
We know from Lemma \ref{lemma:lem4} that these ideals are generated in degree at least four, so the relations on the generators must be of degree at least five, and the relations on those must be of degree at least six. The claim follows. \hfill \qedsymbol \\ \\
We now proceed with the proof of Theorem \ref{thm:pure}.
\newpage
\pure*
\begin{proof} [Proof of Theorem~\ref{thm:pure}]
We prove this statement by contraposition. Suppose $G$ is a simple connected graph that is non-bipartite and not an odd cycle, we claim that the minimal free resolution of the parity binomial edge ideal $J_{G}$ of $G$ cannot be a pure resolution. This follows as one can always find an induced subgraph $H$ of $G$, such that the minimal free resolution of $J_{H}$ is nonpure, which implies the resolution of $J_{G}$ must also be nonpure by Remark \ref{remark:rmk1} above. We proceed by detailing how these subgraphs are determined. \\ \\
Recall that a non-bipartite graph must contain an odd cycle. If there is more than one odd cycle in $G$, we restrict our attention to one of minimal length. Since $G$ is a connected graph, this odd cycle must share an edge with some vertex $v$ not contained in the cycle; in other words, it must have at least one vertex with degree at least $3$. Now we let $H$ be the induced subgraph of $G$ given by this vertex, its two closest neighbours in the odd cycle, and the vertex $v$. \\ \\ This leaves us with four cases of induced subgraphs (up to permutation of indices), namely, Figures \ref{fig:1a}, \ref{fig:1b}, \ref{fig:1c} and \ref{fig:2a}, which we denote $G_1,G_2,G_3$ and $G_4$ respectively.
\begin{figure}[H]
$ \quad $
\begin{subfigure}[b]{0.25\textwidth}
\begin{tikzpicture}
\node[main node] (2) {$2$};
\node[main node] (1) [below left = 0.6cm and 1cm of 2]  {$1$};
\node[main node] (3) [below right = 0.6cm and 1cm of 2] {$3$};
\node[main node] (v) [above = 1.2cm of 2] {$v$};

\path[draw,thick]
(1) edge node {} (2)
(2) edge node {} (3)
(2) edge node {} (v);
\end{tikzpicture}
\caption{$G_1$}
\label{fig:1a}
\end{subfigure}
$ \qquad $
\begin{subfigure}[b]{0.25\textwidth}
\begin{tikzpicture}
\node[main node] (2) {$2$};
\node[main node] (1) [below left = 0.6cm and 1cm of 2]  {$1$};
\node[main node] (3) [below right = 0.6cm and 1cm of 2] {$3$};
\node[main node] (v) [above = 1.2cm of 2] {$v$};

\path[draw,thick]
(1) edge node {} (2)
(2) edge node {} (3)
(2) edge node {} (v)
(1) edge node {} (v)
(1) edge node {} (3)
(3) edge node {} (v);
\end{tikzpicture}
\caption{$G_2$}
\label{fig:1b}
\end{subfigure}
$ \qquad $
\begin{subfigure}[b]{0.25\textwidth}
\begin{tikzpicture}
\node[main node] (2) {$2$};
\node[main node] (1) [below left = 0.6cm and 1cm of 2]  {$1$};
\node[main node] (3) [below right = 0.6cm and 1cm of 2] {$3$};
\node[main node] (v) [above = 1.2cm of 2] {$v$};

\path[draw,thick]
(1) edge node {} (2)
(2) edge node {} (3)
(2) edge node {} (v)
(1) edge node {} (v)
(3) edge node {} (v);
\end{tikzpicture}
\caption{$G_3$}
\label{fig:1c}
\end{subfigure}
\caption{}
\end{figure}
\begin{figure}[H]
$ \qquad \qquad \qquad $
\begin{subfigure}[b]{0.25\textwidth}
\begin{tikzpicture}
\node[main node] (2) {$2$};
\node[main node] (1) [below left = 0.6cm and 1cm of 2]  {$1$};
\node[main node] (3) [below right = 0.6cm and 1cm of 2] {$3$};
\node[main node] (v) [above = 1.2cm of 2] {$v$};

\path[draw,thick]
(1) edge node {} (2)
(2) edge node {} (3)
(2) edge node {} (v)
(1) edge node {} (3);
\end{tikzpicture}
\caption{$G_4$}
\label{fig:2a}
\end{subfigure}
$ \qquad \qquad $
\begin{subfigure}[b]{0.25\textwidth}
\begin{tikzpicture}
\node[main node] (2) {$2$};
\node[main node] (1) [left = 1cm of 2]  {$1$};
\node[main node] (3) [right = 1cm of 2] {$3$};
\node[main node] (4) [right = 1cm of 3] {$4$};
\node[main node] (v) [below = 1cm of 2] {$v$};

\path[draw,thick]
(1) edge node {} (2)
(2) edge node {} (3)
(3) edge node {} (4)
(2) edge node {} (v);
\end{tikzpicture}
\caption{$G_5$}
\label{fig:2b}
\end{subfigure}
\caption{}
\end{figure}
\begin{figure}[H]
$ \qquad \qquad \,$
\begin{subfigure}[b]{0.4\textwidth}
\begin{tikzpicture}
\node[main node] (2) {$2$};
\node[main node] (1) [left = 1cm of 2]  {$1$};
\node[main node] (3) [right = 1cm of 2] {$3$};
\node[main node] (4) [below = 1cm of 3] {$4$};
\node[main node] (v) [below = 1cm of 2] {$v$};

\path[draw,thick]
(1) edge node {} (2)
(2) edge node {} (3)
(3) edge node {} (4)
(2) edge node {} (v)
(4) edge node {} (v);
\end{tikzpicture}
\caption{$G_6$}
\label{fig:3a}
\end{subfigure}
$ \qquad $
\begin{subfigure}[b]{0.4\textwidth}
\begin{tikzpicture}
\node[main node] (2) {$2$};
\node[main node] (1) [left = 1cm of 2]  {$1$};
\node[main node] (3) [right = 1cm of 2] {$3$};
\node[main node] (4) [below = 1cm of 3] {$4$};
\node[main node] (v) [below = 1cm of 2] {$v$};

\path[draw,thick]
(1) edge node {} (2)
(2) edge node {} (3)
(3) edge node {} (4)
(2) edge node {} (v)
(4) edge node {} (v)
(1) edge node {} (4);
\end{tikzpicture}
\caption{$G_7$}
\label{fig:3b}
\end{subfigure}
\caption{}
\end{figure}
We already know from Lemma \ref{lemma:lem1} that $J_{G_1}$ has a pure resolution. For the moment, we focus our attention on the remaining ideals $J_{G_2}$, $J_{G_3}$ and $J_{G_4}$, where we show they have nonpure resolutions. First, we note that $G_2$, $G_3$ and $G_4$ have $C_3$ as an induced subgraph. Therefore, by applying Remark \ref{remark:rmk1} \& Lemma \ref{lemma:lem2}, we deduce that $\beta_{3,6}(J_{G_{2}}), \beta_{3,6}(J_{G_{3}}) $ and $ \beta_{3,6}(J_{G_{4}}) $ must all be nonzero. \\ \\
We now claim $\beta_{3,5}$ is nonzero for each of these ideals, which we prove with the aid of some short exact sequences. Beginning with the graph $G_2$, we note $C_3$ and $K_{1,3}$ can be considered as subgraphs of $G_2$ such that neither share an edge, as shown below.
\begin{figure}[H]
$ \qquad $
\begin{subfigure}[b]{0.25\textwidth}
\begin{tikzpicture}
\node[main node] (2) {$2$};
\node[main node] (1) [below left = 0.6cm and 1cm of 2]  {$1$};
\node[main node] (3) [below right = 0.6cm and 1cm of 2] {$3$};
\node[main node] (4) [above = 1.2cm of 2] {$4$};

\path[draw,thick]
(1) edge node {} (2)
(2) edge node {} (3)
(2) edge node {} (4)
(1) edge node {} (4)
(1) edge node {} (3)
(3) edge node {} (4);
\end{tikzpicture}
\caption{$G_2$}
\label{fig:4a}
\end{subfigure}
$ \qquad $
\begin{subfigure}[b]{0.25\textwidth}
\begin{tikzpicture}
\node[main node] (2) {$2$};
\node[main node] (1) [below left = 0.6cm and 1cm of 2]  {$1$};
\node[main node] (3) [below right = 0.6cm and 1cm of 2] {$3$};
\node[main node] (4) [above = 1.2cm of 2] {$4$};

\path[draw,thick]
(1) edge node {} (3)
(1) edge node {} (4)
(3) edge node {} (4);
\end{tikzpicture}
\caption{$C_3$}
\label{fig:4b}
\end{subfigure}
$ \qquad $
\begin{subfigure}[b]{0.25\textwidth}
\begin{tikzpicture}
\node[main node] (2) {$2$};
\node[main node] (1) [below left = 0.6cm and 1cm of 2]  {$1$};
\node[main node] (3) [below right = 0.6cm and 1cm of 2] {$3$};
\node[main node] (4) [above = 1.2cm of 2] {$4$};

\path[draw,thick]
(1) edge node {} (2)
(2) edge node {} (3)
(2) edge node {} (4);
\end{tikzpicture}
\caption{$K_{1,3}$}
\label{fig:4c}
\end{subfigure}
\caption{}
\label{fig:4}
\end{figure}
Consider the following short exact sequence, and the long exact Tor sequence it induces:
$$ 0 \longrightarrow R/(J_{C_3} \cap J_{K_{1,3}}) \longrightarrow R/J_{C_3} \oplus R/J_{K_{1,3}} \longrightarrow R/(J_{C_3}+J_{K_{1,3}}) \longrightarrow 0$$
\begin{align*}
\dots & \longrightarrow \Tor_{3}(R/(J_{C_3} \cap J_{K_{1,3}}),\mathbb{K})_5 \longrightarrow \Tor_{3}(R/J_{C_3} \oplus R/J_{K_{1,3}},\mathbb{K})_5 \longrightarrow \\ 
& \longrightarrow \Tor_{3}(R/(J_{C_3}+J_{K_{1,3}}),\mathbb{K})_5  \longrightarrow \Tor_{2}(R/(J_{C_3} \cap J_{K_{1,3}}),\mathbb{K})_5 \longrightarrow \\ 
& \longrightarrow \Tor_{2}(R/J_{C_3} \oplus R/J_{K_{1,3}},\mathbb{K})_5 \longrightarrow \dots
\end{align*}
We know that $\Tor_{3}(R/(J_{C_3} \cap J_{K_{1,3}}),\mathbb{K})_5 = 0$ and $\Tor_{2}(R/J_{C_3} \oplus R/J_{K_{1,3}},\mathbb{K})_5 = 0$ by Lemmas \ref{lemma:lem3} \& \ref{lemma:lem5}. This leads to the short exact sequence of vector spaces:
\begin{align*}
0 & \longrightarrow \Tor_{3}(R/J_{C_3} \oplus R/J_{K_{1,3}},\mathbb{K})_5  \longrightarrow \Tor_{3}(R/(J_{C_3}+J_{K_{1,3}}),\mathbb{K})_5 \longrightarrow \\ 
& \longrightarrow \Tor_{2}(R/(J_{C_3} \cap J_{K_{1,3}}),\mathbb{K})_5 \longrightarrow 0
\end{align*}
It is easily shown that $J_{C_3}+J_{K_{1,3}} = J_{G_{2}}$, and therefore: $$ \beta_{3,5}(R/J_{G_2}) = \beta_{3,5}(R/J_{C_3}) + \beta_{3,5}(R/J_{K_{1,3}}) + \beta_{2,5}(R/(J_{C_3} \cap J_{K_{1,3}})) $$
By applying Lemma \ref{lemma:lem3} to this equality, we deduce that $ \beta_{3,5}(R/J_{G_2}) > 0 $ as claimed. We now proceed with the case $G_3$, where $K_{1,3}$ and $P_{3}$ are considered as subgraphs of $G_3$ with no overlapping edges as shown.
\begin{figure}[H]
$ \qquad $
\begin{subfigure}[b]{0.25\textwidth}
\begin{tikzpicture}
\node[main node] (2) {$2$};
\node[main node] (1) [below left = 0.6cm and 1cm of 2]  {$1$};
\node[main node] (3) [below right = 0.6cm and 1cm of 2] {$3$};
\node[main node] (4) [above = 1.2cm of 2] {$4$};

\path[draw,thick]
(1) edge node {} (2)
(2) edge node {} (3)
(2) edge node {} (4)
(1) edge node {} (4)
(3) edge node {} (4);
\end{tikzpicture}
\caption{$G_3$}
\label{fig:5a}
\end{subfigure}
$ \qquad $
\begin{subfigure}[b]{0.25\textwidth}
\begin{tikzpicture}
\node[main node] (2) {$2$};
\node[main node] (1) [below left = 0.6cm and 1cm of 2]  {$1$};
\node[main node] (3) [below right = 0.6cm and 1cm of 2] {$3$};
\node[main node] (4) [above = 1.2cm of 2] {$4$};

\path[draw,thick]
(1) edge node {} (2)
(2) edge node {} (3)
(2) edge node {} (4);
\end{tikzpicture}
\caption{$K_{1,3}$}
\label{fig:5b}
\end{subfigure}
$ \qquad $
\begin{subfigure}[b]{0.25\textwidth}
\begin{tikzpicture}
\node[main node] (2) {$2$};
\node[main node] (1) [below left = 0.6cm and 1cm of 2]  {$1$};
\node[main node] (3) [below right = 0.6cm and 1cm of 2] {$3$};
\node[main node] (4) [above = 1.2cm of 2] {$4$};

\path[draw,thick]
(1) edge node {} (4)
(3) edge node {} (4);
\end{tikzpicture}
\caption{$P_{3}$}
\label{fig:5c}
\end{subfigure}
\caption{}
\label{fig:5}
\end{figure}
Consider the following short exact sequence, and the long exact Tor sequence it induces:
$$ 0 \longrightarrow R/(J_{P_3} \cap J_{K_{1,3}}) \longrightarrow R/J_{P_3} \oplus R/J_{K_{1,3}} \longrightarrow R/(J_{P_3}+J_{K_{1,3}}) \longrightarrow 0$$
\begin{align*}
\dots & \longrightarrow \Tor_{3}(R/(J_{P_3} \cap J_{K_{1,3}}),\mathbb{K})_5 \longrightarrow \Tor_{3}(R/J_{P_3} \oplus R/J_{K_{1,3}},\mathbb{K})_5 \longrightarrow \\ 
& \longrightarrow \Tor_{3}(R/(J_{P_3}+J_{K_{1,3}}),\mathbb{K})_5  \longrightarrow \Tor_{2}(R/(J_{P_3} \cap J_{K_{1,3}}),\mathbb{K})_5 \longrightarrow \\ 
& \longrightarrow \Tor_{2}(R/J_{P_3} \oplus R/J_{K_{1,3}},\mathbb{K})_5 \longrightarrow \dots
\end{align*}
We know that $\Tor_{3}(R/(J_{P_3} \cap J_{K_{1,3}}),\mathbb{K})_5 = 0$ and $\Tor_{2}(R/J_{P_3} \oplus R/J_{K_{1,3}},\mathbb{K})_5 = 0$ by Lemmas \ref{lemma:lem3} \& \ref{lemma:lem5}. This leads to the short exact sequence of vector spaces:
\begin{align*}
0 & \longrightarrow \Tor_{3}(R/J_{P_3} \oplus R/J_{K_{1,3}},\mathbb{K})_5  \longrightarrow \Tor_{3}(R/(J_{P_3}+J_{K_{1,3}}),\mathbb{K})_5 \longrightarrow \\ 
& \longrightarrow \Tor_{2}(R/(J_{P_3} \cap J_{K_{1,3}}),\mathbb{K})_5 \longrightarrow 0
\end{align*}
It is easily shown that $J_{P_3}+J_{K_{1,3}} = J_{G_{3}}$, and therefore: $$ \beta_{3,5}(R/J_{G_3}) = \beta_{3,5}(R/J_{P_3}) + \beta_{3,5}(R/J_{K_{1,3}}) + \beta_{2,5}(R/(J_{P_3} \cap J_{K_{1,3}})) $$
Again, by applying Lemma \ref{lemma:lem3} to this equality, we find that $ \beta_{3,5}(R/J_{G_3}) > 0$ as claimed. Finally, we look at the case $G_4$, where $K_{1,3}$ and $P_{2}$ are considered as subgraphs of $G_4$ with no overlapping edges as shown.
\begin{figure}[H]
$ \qquad $
\begin{subfigure}[b]{0.25\textwidth}
\begin{tikzpicture}
\node[main node] (2) {$2$};
\node[main node] (1) [below left = 0.6cm and 1cm of 2]  {$1$};
\node[main node] (3) [below right = 0.6cm and 1cm of 2] {$3$};
\node[main node] (4) [above = 1.2cm of 2] {$4$};

\path[draw,thick]
(1) edge node {} (2)
(2) edge node {} (3)
(2) edge node {} (4)
(1) edge node {} (3);
\end{tikzpicture}
\caption{$G_4$}
\label{fig:6a}
\end{subfigure}
$ \qquad  $
\begin{subfigure}[b]{0.25\textwidth}
\begin{tikzpicture}
\node[main node] (2) {$2$};
\node[main node] (1) [below left = 0.6cm and 1cm of 2]  {$1$};
\node[main node] (3) [below right = 0.6cm and 1cm of 2] {$3$};
\node[main node] (4) [above = 1.2cm of 2] {$4$};

\path[draw,thick]
(1) edge node {} (2)
(2) edge node {} (3)
(2) edge node {} (4);
\end{tikzpicture}
\caption{$K_{1,3}$}
\label{fig:6b}
\end{subfigure}
$ \qquad $
\begin{subfigure}[b]{0.25\textwidth}
\begin{tikzpicture}
\node[main node] (2) {$2$};
\node[main node] (1) [below left = 0.6cm and 1cm of 2]  {$1$};
\node[main node] (3) [below right = 0.6cm and 1cm of 2] {$3$};
\node[main node] (4) [above = 1.2cm of 2] {$4$};

\path[draw,thick]
(1) edge node {} (3);
\end{tikzpicture}
\caption{$P_{2}$}
\label{fig:6c}
\end{subfigure}
\caption{}
\label{fig:6}
\end{figure}
Consider the following short exact sequence, and the long exact Tor sequence it induces:
$$ 0 \longrightarrow R/(J_{P_2} \cap J_{K_{1,3}}) \longrightarrow R/J_{P_2} \oplus R/J_{K_{1,3}} \longrightarrow R/(J_{P_2}+J_{K_{1,3}}) \longrightarrow 0$$
\begin{align*}
\dots & \longrightarrow \Tor_{3}(R/(J_{P_2} \cap J_{K_{1,3}}),\mathbb{K})_5 \longrightarrow \Tor_{3}(R/J_{P_2} \oplus R/J_{K_{1,3}},\mathbb{K})_5 \longrightarrow \\ 
& \longrightarrow \Tor_{3}(R/(J_{P_2}+J_{K_{1,3}}),\mathbb{K})_5  \longrightarrow \Tor_{2}(R/(J_{P_2} \cap J_{K_{1,3}}),\mathbb{K})_5 \longrightarrow \\ 
& \longrightarrow \Tor_{2}(R/J_{P_2} \oplus R/J_{K_{1,3}},\mathbb{K})_5 \longrightarrow \dots
\end{align*}
We know that $\Tor_{3}(R/(J_{P_2} \cap J_{K_{1,3}}),\mathbb{K})_5 = 0$ and $\Tor_{2}(R/J_{P_2} \oplus R/J_{K_{1,3}},\mathbb{K})_5 = 0$ by Lemmas \ref{lemma:lem3} \& \ref{lemma:lem5}. This leads to the short exact sequence of vector spaces:
\begin{align*}
0 & \longrightarrow \Tor_{3}(R/J_{P_2} \oplus R/J_{K_{1,3}},\mathbb{K})_5  \longrightarrow \Tor_{3}(R/(J_{P_2}+J_{K_{1,3}}),\mathbb{K})_5 \longrightarrow \\ 
& \longrightarrow \Tor_{2}(R/(J_{P_2} \cap J_{K_{1,3}}),\mathbb{K})_5 \longrightarrow 0
\end{align*}
It is easily shown that $J_{P_2}+J_{K_{1,3}} = J_{G_{4}}$, and therefore: $$ \beta_{3,5}(R/J_{G_4}) = \beta_{3,5}(R/J_{P_2}) + \beta_{3,5}(R/J_{K_{1,3}}) + \beta_{2,5}(R/(J_{P_2} \cap J_{K_{1,3}})) $$
Once more, by applying Lemma \ref{lemma:lem3} to the equality, we obtain $ \beta_{3,5}(R/J_{G_4}) > 0$ as claimed. Therefore, both $\beta_{3,5}$ and $\beta_{3,6}$ are nonzero for each of the above ideals, and we conclude that $J_{G_2},J_{G_3},J_{G_4}$ have nonpure resolutions. \\ \\
We have seen that if our induced subgraph $H$ is any of $G_2,G_3$ or $G_4$, then we are done, so we need only deal with the case $H=G_1$. Let us begin by introducing an additional vertex to our induced subgraph. This vertex is chosen from the odd cycle such that it neighbours either vertex $1$ or $3$ in Figure \ref{fig:1a} above. We know this vertex exists as our chosen cycle has length at least five. This will lead to a further three cases (up to permutation of indices), namely Figures \ref{fig:2b}, \ref{fig:3a} and \ref{fig:3b} which we will denote $G_5,G_6$ and $G_7$ respectively.
\newpage
Beginning with Figure \ref{fig:2b}, it is clear that both $K_{1,3}$ and $P_{4}$ are induced subgraphs of $G_{5}$, thus $\beta_{3,5}(R/J_{G_5}) > 0 $ and $\beta_{3,6}(R/J_{G_5}) > 0 $ by Lemmas \ref{lemma:lem2} \& \ref{lemma:lem3}. Therefore $J_{G_5}$ has a nonpure resolution. Next, we look at Figure \ref{fig:3a}. It is clear that both $K_{2,2}$ and $P_4$ are induced subgraphs of $G_{6}$, thus $\beta_{3,5}(R/J_{G_6}) > 0 $ and $\beta_{3,6}(R/J_{G_6}) > 0 $ by Lemmas \ref{lemma:lem2} \& \ref{lemma:lem3}. Therefore $J_{G_6}$ has a nonpure resolution.
\\ \\
Finally, we claim Figure \ref{fig:3b} amounts to a contradiction, so $H$ cannot be equal to $G_7$. First, note that the vertices $1$ and $4$ are contained in the odd cycle, so the edge between them is in fact a chord on the cycle. It is known that a chord through an odd cycle divides it into an even cycle and another odd cycle of smaller length. This contradicts the minimality of our chosen cycle. Therefore, we can rule out this final case and the theorem is proven.
\end{proof}
\newpage
\section{Corollaries}
To prove the corollaries below, we must first introduce some results from \cite{pure}. The first of these is found within the proof of \cite[Theorem 2.2]{pure}, while the second is a restatement of \cite[Lemma 2.5]{pure}.
\begin{remark}
\label{remark:cor1}
If the binomial edge ideal of a graph $G$ has a pure resolution, and $G$ is not a complete graph, then $G$ must be a bipartite graph or a disjoint union of paths.
\end{remark}
\begin{remark}
\label{remark:cor2}
Let $I \subset R = \mathbb{K}[x_1 \dots x_n]$ and $J \subset T=\mathbb{K}[x_{n+1} \dots x_m]$ be two graded ideals with pure resolutions.
$$ 0 \longrightarrow R(-d_{p})^{\beta_{p}} \longrightarrow \dots  \longrightarrow R(-d_{1})^{\beta_{1}} \longrightarrow R \longrightarrow R/I \longrightarrow 0 $$
$$ 0 \longrightarrow T(-e_{q})^{\gamma_{q}} \longrightarrow \dots  \longrightarrow T(-e_{1})^{\gamma_{1}} \longrightarrow T \longrightarrow T/J \longrightarrow 0 $$
and let $S=\mathbb{K}[x_1 \dots x_m]$. Then $IS + JS$ has a pure resolution if and only if $e_1 = d_1$, $d_i = id_1$ and $e_j = je_1$ for all $i,j$.
This lemma was proven using the fact that the minimal graded free resolution of $S/(IS+JS)$ is the tensor product of those of $R/I$ and $T/J$, which yields the formula for all $i,j$:
$$ \beta_{i,j}(S/(IS+JS)) = \sum_{t+t'=i,k+k'=j} \beta_{t,k}(R/I) \beta_{t',k'}(T/J) $$
If $G$ is a disjoint union of connected components, then the minimal free resolution of $R/J_{G}$ is equal to the tensor product of the minimal free resolutions arising from those connected components. Therefore, the formula above allows us to compute the Betti numbers $\beta_{i,j}(R/J_{G})$ using the Betti numbers of the connected components.
\end{remark}
\begin{corollary}
\label{remark:cor3}
Let $G$ be a simple connected graph, then $J_{G}$ has a pure resolution if and only if $G$ is an odd cycle, a complete bipartite graph, or a path graph.
\end{corollary}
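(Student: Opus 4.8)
\emph{Approach.} The plan is to combine Theorem~\ref{thm:pure} with the Kiani--Madani classification of binomial edge ideals having pure resolutions, handling the two directions of the equivalence separately; the forward direction is essentially immediate from Theorem~\ref{thm:pure} once one reduces the bipartite case to the ordinary binomial edge ideal, and the converse is a case check over the three families.

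\emph{Forward direction.} Assume $G$ is simple and connected and $J_G$ has a pure resolution. By Theorem~\ref{thm:pure}, $G$ is an odd cycle or bipartite, and in the first case we are done. If $G$ is bipartite, then by Remark~\ref{remark:rmk2} the binomial edge ideal of $G$ is carried onto $J_G$ by a ring automorphism, so it too has a pure resolution; the Kiani--Madani classification (\cite{pure}; cf.\ Remark~\ref{remark:cor1}) then forces $G$ to be a complete graph, a complete bipartite graph, or a disjoint union of paths. Since $G$ is connected, ``disjoint union of paths'' means a single path, and since a complete graph is bipartite only when it is $K_1$ or $K_2$ --- both path graphs --- we conclude that $G$ is a complete bipartite graph or a path.

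\emph{Converse.} I would check the three families in turn. If $G$ is a path or an odd cycle, then $J_G$ is a complete intersection by Kumar's theorem \cite{intersection}, hence resolved by a Koszul complex on a regular sequence of quadrics; in such a complex $F_i$ is generated in the single degree $2i$, so the resolution is pure. If $G=K_{m,n}$ is complete bipartite, then it is bipartite, so by Remark~\ref{remark:rmk2} the graded Betti numbers of $R/J_G$ coincide with those of the binomial edge ideal of $K_{m,n}$; by Remark~\ref{remark:rmk3} these occupy only rows $0,1,2$ of the Betti table, with a single nonzero entry in each of homological degrees $0$ and $1$ and at most the single entry $\beta_{i,i+2}$ in each homological degree $i\ge 2$. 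Hence every free module in the resolution of $R/J_G$ is generated in one degree, so the resolution is pure --- this is exactly the extension of Lemma~\ref{lemma:lem1} from $K_{1,3}$ to arbitrary $K_{m,n}$.

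\emph{Main obstacle.} Since this is a corollary, I expect no serious difficulty; the two points that merit a line of care are verifying that the Betti table of Remark~\ref{remark:rmk3} really is in ``pure position'' (immediate from its having only three nonzero rows and a single nonzero entry in each of the first two columns) and the bookkeeping that trims the Kiani--Madani list down to the three stated families using connectedness and bipartiteness. I would also note explicitly that these three families overlap (for instance $P_1=K_1$, $P_2=K_2=K_{1,1}$, and $C_3=K_3$), so the equivalence is not a partition statement.
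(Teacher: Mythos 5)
Your proposal is correct and follows essentially the same route as the paper: Theorem~\ref{thm:pure} plus Remark~\ref{remark:rmk2} to transfer the bipartite case to the ordinary binomial edge ideal and invoke the Kiani--Madani classification for the forward direction, and the complete-intersection/Koszul argument together with Remarks~\ref{remark:rmk2} \&~\ref{remark:rmk3} for the converse. Your explicit handling of the complete-graph overlap ($K_1$, $K_2$ being paths) matches the paper's parenthetical remark, so there is nothing substantive to add.
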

\begin{proof}
If $G$ is a complete bipartite graph, Remarks \ref{remark:rmk2} \& \ref{remark:rmk3} imply that $J_{G}$ has a pure resolution. If $G$ is an odd cycle or a path graph, then we know from \cite{intersection} that $J_{G}$ is a complete intersection ideal and therefore has a pure resolution. Now we consider the other direction. \\ \\
We have shown in Theorem \ref{thm:pure} that if $J_{G}$ has a pure resolution, then $G$ must be a bipartite graph or an odd cycle. Suppose $G$ is a bipartite graph, then it cannot be a complete graph ($K_{2}$ is also a path graph, so we can ignore it here). Remarks \ref{remark:rmk2} \& \ref{remark:cor1} then imply that $G$ must be a complete bipartite graph or a path graph, which completes the proof.
\end{proof}
\newpage
\begin{corollary}
Let $G$ be a simple graph with no isolated vertices, then $J_{G}$ has a pure resolution if and only if $G$ is a complete bipartite graph, or a disjoint union of paths and odd cycles.
\end{corollary}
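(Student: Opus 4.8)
The strategy is to split $G$ into connected components and bootstrap from the connected case, Corollary~\ref{remark:cor3}, using the tensor product description of the resolution of a disjoint union recorded in Remark~\ref{remark:cor2}. Write $G = G_{1} \sqcup \cdots \sqcup G_{s}$ with each $G_{i}$ connected; since $G$ has no isolated vertices, every $G_{i}$ has at least one edge, and since every parity binomial edge ideal is generated by quadrics we have $\beta_{1,2}(R/J_{G_{i}}) = |E(G_{i})| > 0$ while the generators of each $J_{G_{i}}$ all sit in degree $2$. For the direction ($\Leftarrow$): if $G$ is complete bipartite it is connected, so Corollary~\ref{remark:cor3} applies; if $G$ is a disjoint union of paths and odd cycles, then by \cite{intersection} each $J_{G_{i}}$ is a complete intersection of quadrics, hence $R/J_{G_{i}}$ is resolved by a Koszul complex whose only nonzero Betti numbers are the $\beta_{k,2k}$. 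An inductive application of Remark~\ref{remark:cor2} — at each stage both ideals are generated in degree $2$ and satisfy $d_{i} = 2i$, so the hypotheses $e_{1} = d_{1}$, $d_{i} = i d_{1}$, $e_{j} = j e_{1}$ all hold — shows that $R/J_{G}$ has a pure resolution.

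For ($\Rightarrow$), suppose $R/J_{G}$ has a pure resolution. The first step is to deduce that each $R/J_{G_{i}}$ has a pure resolution as well. By Remark~\ref{remark:cor2} the minimal free resolution of $R/J_{G}$ is the tensor product of those of the $R/J_{G_{i}}$, so for each $i$ and all $t,k$ one has $\beta_{t,k}(R/J_{G}) \geq \beta_{t,k}(R/J_{G_{i}})$, reading off the summand in which every factor other than the $i$-th contributes its degree-$0$ part $\beta_{0,0} = 1$ (here nonnegativity of Betti numbers rules out any cancellation). Consequently, if some $R/J_{G_{i}}$ had two nonzero Betti numbers $\beta_{t,k}$ and $\beta_{t,k'}$ with $k \neq k'$, then so would $R/J_{G}$, contradicting purity. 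Hence each $R/J_{G_{i}}$ is pure, and Corollary~\ref{remark:cor3} tells us each $G_{i}$ is an odd cycle, a complete bipartite graph, or a path.

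It remains to exclude genuinely mixed unions. If $s = 1$ we are done. If $s \geq 2$, I claim no $G_{i}$ can be a complete bipartite graph that is not also a path, i.e.\ not $K_{1,1}$ or $K_{1,2}$. Suppose $G_{1}$ is such a graph; since $G_{1}$ then contains $K_{1,3}$ or $K_{2,2}$ as an induced subgraph, Remark~\ref{remark:rmk1} together with Lemma~\ref{lemma:lem3} gives $\beta_{3,5}(R/J_{G_{1}}) > 0$, and Remark~\ref{remark:rmk3} (via Remark~\ref{remark:rmk2}) gives $\beta_{2,4}(R/J_{G_{1}}) > 0$. Plugging into the tensor product formula and using $\beta_{1,2}(R/J_{G_{2}}) > 0$: the summand $\beta_{3,5}(R/J_{G_{1}}) \cdot \prod_{i \geq 2} \beta_{0,0}(R/J_{G_{i}})$ forces $\beta_{3,5}(R/J_{G}) > 0$, while the summand $\beta_{2,4}(R/J_{G_{1}}) \cdot \beta_{1,2}(R/J_{G_{2}}) \cdot \prod_{i \geq 3} \beta_{0,0}(R/J_{G_{i}})$ forces $\beta_{3,6}(R/J_{G}) > 0$. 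Two distinct degrees in homological position $3$ contradict purity. Therefore every $G_{i}$ is a path or an odd cycle, so $G$ is a disjoint union of paths and odd cycles. Combining the two cases shows $R/J_{G}$ has a pure resolution only if $G$ is complete bipartite or a disjoint union of paths and odd cycles, and with the ($\Leftarrow$) direction this proves the corollary.

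I expect the only genuinely delicate point to be the descent of purity from $R/J_{G}$ to the individual factors $R/J_{G_{i}}$; everything else is degree bookkeeping inside Remark~\ref{remark:cor2}. One should also be mildly careful that $K_{1,1}$ and $K_{1,2}$, which do satisfy $d_{i}=2i$, are accounted for as paths rather than as ``genuine'' complete bipartite components, so that the final dichotomy is stated without redundancy — this is precisely why only the larger complete bipartite graphs (those containing $K_{1,3}$ or $K_{2,2}$) obstruct purity of a disjoint union.
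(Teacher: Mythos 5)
Your proposal is correct and follows essentially the same route as the paper: reduce to connected components via Corollary~\ref{remark:cor3}, then use the tensor product formula of Remark~\ref{remark:cor2} to show that a complete bipartite component other than $K_{1,1}$ or $K_{1,2}$ coexisting with any other component forces both $\beta_{3,5}$ and $\beta_{3,6}$ to be nonzero. The only cosmetic differences are that you descend purity to components via the tensor formula rather than the induced-subgraph inequality of Remark~\ref{remark:rmk1}, and you use $\beta_{1,2}$ of the second component directly instead of passing through $P_{2}$ as an induced subgraph; both are sound.
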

\begin{proof}
If $G$ is a complete bipartite graph, then $J_{G}$ has a pure resolution by Remarks \ref{remark:rmk2} \& \ref{remark:rmk3}. If $G$ is a disjoint union of paths and odd cycles, then the parity binomial edge ideal of each connected component is a complete intersection ideal. It follows from Remark \ref{remark:cor2} that $J_{G}$ must also have a pure resolution. \\ \\
Now we consider the other direction and suppose that $J_{G}$ has a pure resolution. The parity binomial edge ideal of each connected component in $G$ must also have a pure resolution by Remark \ref{remark:rmk1}, so Corollary \ref{remark:cor3} implies that each of these components must either be an odd cycle, a complete bipartite graph, or a path graph. Suppose one of these connected components is a complete bipartite graph $K_{m,n}$ where we exclude the path graphs $K_{1,1}$ and $K_{1,2}$. This ensures that $\beta_{2,4}(R/J_{K_{m,n}})$ and $\beta_{3,5}(R/J_{K_{m,n}})$ are nonzero by Remarks \ref{remark:rmk2} \& \ref{remark:rmk3}. \\ \\
Now suppose that another connected component is the path graph $P_{2}$. We know that $\beta_{1,2}(R/J_{P_{2}})$ is nonzero since $J_{P_{2}}$ is a complete intersection ideal, and we also know that $\beta_{3,6}(R/J_{G}) = \beta_{1,2}(R/J_{P_{2}}) \cdot  \beta_{2,4}(R/J_{K_{m,n}})$ by Remark \ref{remark:cor2}. We've just shown that both these Betti numbers are nonzero, which implies $\beta_{3,6}(R/J_{G})$ is nonzero. Since $\beta_{3,5}(R/J_{K_{m,n}})$ is nonzero, Remark \ref{remark:rmk1} then implies that $\beta_{3,5}(R/J_G)$ is nonzero, and therefore $J_{G}$ has a nonpure resolution, a contradiction. This means if $K_{m,n}$ is a connected component of $G$, it must be the only connected component, as any other component will contain $P_{2}$ as an induced subgraph. We conclude that $G$ must either be a complete bipartite graph, or a disjoint union of paths and odd cycles as claimed.
\end{proof}
\subsection*{Acknowledgements}
I would like to thank my supervisor - Emil Sk{\"o}ldberg - for his support and guidance throughout this paper. His help made all this possible.
\newpage

\Addresses
\end{document}